\documentclass[12pt]{article}%
\usepackage{amsmath}
\usepackage{amsfonts}
\usepackage{amssymb}
\usepackage{graphicx}%
\setcounter{MaxMatrixCols}{30}
%TCIDATA{OutputFilter=latex2.dll}
%TCIDATA{Version=5.50.0.2953}
%TCIDATA{CSTFile=40 LaTeX article.cst}
%TCIDATA{Created=Friday, May 11, 2018 22:16:38}
%TCIDATA{LastRevised=Monday, September 16, 2019 09:16:30}
%TCIDATA{<META NAME="GraphicsSave" CONTENT="32">}
%TCIDATA{<META NAME="SaveForMode" CONTENT="1">}
%TCIDATA{BibliographyScheme=Manual}
%TCIDATA{<META NAME="DocumentShell" CONTENT="Standard LaTeX\Blank - Standard LaTeX Article">}
%TCIDATA{Language=American English}
%BeginMSIPreambleData
\providecommand{\U}[1]{\protect\rule{.1in}{.1in}}
%EndMSIPreambleData
\newtheorem{theorem}{Theorem}

\newtheorem{definition}[theorem]{Definition}

\newtheorem{proposition}[theorem]{Proposition}
\newtheorem{remark}[theorem]{Remark}

\newenvironment{proof}[1][Proof]{\noindent\textbf{#1.} }{\ \rule{0.5em}{0.5em}}
\begin{document}

\title{Relative periodic solutions of the $n$-vortex problem on the sphere}
\author{Carlos Garc\'{\i}a-Azpeitia \thanks{Depto. Matem\'{a}ticas y Mec\'{a}nica
IIMAS, Universidad Nacional Aut\'{o}noma de M\'{e}xico, Apdo. Postal 20-726,
01000 Ciudad de M\'{e}xico, M\'{e}xico. cgazpe@mym.iimas.unam.mx}}
\maketitle

\begin{abstract}
This paper gives an analysis of the movement of $n\ $vortices on the sphere.
When the vortices have equal circulation, there is a polygonal solution that
rotates uniformly around its center. The main result concerns the global
existence of relative periodic solutions that emerge from this polygonal
relative equilibrium. In addition, it is proved that the families of relative
periodic solutions contain dense sets of choreographies.

Keywords: $n$-vortex problem, choreographies, global bifurcation, equivariant
degree theory.

MSC 34C25, 37G40, 47H11, 54F45

\end{abstract}

\section{Introduction}

The model for the interaction of $n$-vortex points on the sphere is derived in
\cite{Gr52} and \cite{Bo77}. Among other applications, this model describes
the interaction of hurricanes on the surface on the Earth, vortices in fluid
dynamics, Bose-Einstein condensates and semiconductors. A modern exposition of
the general challenges associated to the $n$-vortex problem is presented in
\cite{Ne01} and references therein.

The $n$ vortices on the unitary sphere $S^{2}$ are described by $v_{j}(t)\in
S^{2}$ for $j=1,..,n$, where the equations are
\begin{equation}
\dot{v}_{j}=\sum_{i\neq j}\frac{v_{i}\times v_{j}}{\left\Vert v_{j}%
-v_{i}\right\Vert ^{2}}. \label{eqv}%
\end{equation}
The stereographic projections of the vortices in the complex plane are given
by%
\[
q_{j}=\frac{x_{j}+iy_{j}}{1-z_{j}}\in\mathbb{C},\qquad v_{j}=(x_{j}%
,y_{j},z_{j})~.
\]
On the sphere, the problem has the solution $q_{j}(t)=e^{i\omega t}\left(
re^{ij\zeta}\right)  $ (polygonal relative equilibrium) when
\[
\omega=\left(  n-1\right)  \frac{1-r^{4}}{8r^{2}},\qquad\zeta=\frac{2\pi}{n}.
\]

The nonlinear stability of this polygonal relative equilibrium has been
established in \cite{Mo11} and \cite{Bo03}. In \cite{Mo11}, a constant
background vorticity is imposed on the sphere so that the total vorticity adds
to zero, equation (\ref{eqv}), while \cite{Bo03} considers an additional
vortex fixed at one of the poles. On the other hand, the existence of families
of relative periodic solutions that bifurcate from the polygonal relative
equilibrium in the plane is proved in \cite{GaIz12}. The main purpose of this
work is to extend the results in \cite{GaIz12} to the sphere.

In a rotating frame, the normal frequencies $\nu_{k}$ of the polygonal
equilibrium on the sphere are given in equation (\ref{vk}) for $k=1,...,n$.
Since $\nu_{k}=\nu_{n-k}$, the normal frequencies are $1:1$ resonant except
for $k=n/2$ for $n$ even, which is also the case of the polygonal relative
equilibrium in the plane \cite{GaIz12}. Due to these resonances, the Lyapunov
center theorem cannot be used to prove the existence of these families.
Actually, in \cite{Ca14} the local existence of relative periodic solutions is
proved using Lyapunov's center theorem, but only for the non-resonant case
$k=n/2$ for $n=2,4,6$. On the other hand, the local existence of periodic
solutions can be obtained for the resonant frequencies $\nu_{k}$ with the
equivariant Weinstein-Moser theorem \cite{Mo88} or Fadell-Rabinowitz theorem
\cite{Bar93}, but those theorems do not prove that the periodic solutions form
a continuous family. We prove the following,

\begin{theorem}
\label{1}For each $k\in\{1,...,n-1\}$ if $n\leq7$ and $k\in\{1,2,n-2,n-1\}$ if
$n\geq8$ such that
\[
4r^{2}\left(  1+r^{2}\right)  ^{-2}<2-\frac{k(n-k)}{n-1},
\]
there is a global continuous family of relative periodic solutions of the
form
\[
q_{j}(t)=e^{i\omega t}x_{j}(\nu t)\text{,}%
\]
that emerges from the polygonal relative equilibrium $x_{j}=re^{ij\zeta}$ with
initial frequency $\nu=\nu_{k}$, where $x_{j}(t)\in\mathbb{C}$ are $2\pi
$-periodic functions satisfying the symmetries
\[
x_{j}(t)=e^{ij\zeta}x_{n}(t+jk\zeta)\text{.}%
\]

\end{theorem}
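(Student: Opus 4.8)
The plan is to prove this global bifurcation result by reformulating the search for relative periodic solutions as a zero-finding problem for an equivariant nonlinear operator, and then applying equivariant degree theory. Let me think through the key pieces.

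First, I need to set up the functional-analytic framework. The ansatz $q_j(t) = e^{i\omega t} x_j(\nu t)$ substituted into the vortex equations gives a system of equations for the $2\pi$-periodic functions $x_j$. The periodic solutions become zeros of a map $f(x,\nu)$ on a space of periodic functions, with the polygonal equilibrium $x_j = re^{ij\zeta}$ as a trivial branch of solutions for all $\nu$.

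The symmetry group is crucial here. The symmetries $x_j(t) = e^{ij\zeta} x_n(t + jk\zeta)$ come from a combination of the cyclic permutation $\mathbb{Z}_n$ of the vortices and the $S^1$ time-translation action. The normal frequencies $\nu_k = \nu_{n-k}$ being $1:1$ resonant means the standard Lyapunov center theorem fails, which is exactly why degree theory is needed — it can handle resonances that obstruct simpler bifurcation arguments.

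The main steps: (1) linearize at the equilibrium, compute the normal frequencies $\nu_k$, and identify where the linearization becomes singular; (2) compute the equivariant degree (or an appropriate index like the orthogonal/gradient degree) of the bifurcation using the isotypic decomposition under the symmetry group; (3) verify the condition $4r^2(1+r^2)^{-2} < 2 - k(n-k)/(n-1)$ is what guarantees the relevant crossing number or degree is nonzero; (4) invoke the global Rabinowitz-type alternative from equivariant degree theory to get a global continuum. The restriction to $n \le 7$ with all $k$, versus $k \in \{1,2,n-2,n-1\}$ for $n \ge 8$, likely comes from needing to verify the degree is nonzero, which requires controlling how many normal frequencies coincide or resonate at the bifurcation point — for large $n$ only the extremal modes can be cleanly isolated.

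The main obstacle will be the degree computation in the presence of the $1:1$ resonance. Because $\nu_k = \nu_{n-k}$, at each bifurcation value two modes cross simultaneously, so the naive crossing number can cancel. The resolution is to use the full equivariant structure: the extra $S^1 \times \mathbb{Z}_n$ symmetry splits the kernel into distinct isotypic components carrying independent degree contributions, so the total equivariant degree does not vanish even when the scalar crossing number does. Establishing that this equivariant degree is nonzero under the stated inequality — and understanding why this forces restricting $k$ for $n \ge 8$ — is where the hard analysis lives; it requires a careful reduction to the irreducible representations and a sign/parity computation that I expect to be the technical heart of the argument.
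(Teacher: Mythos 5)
Your overall architecture matches the paper's: zeros of an equivariant operator $f(x,\nu)$ on loop space, finite-dimensional reduction, isotypic decomposition, a crossing-number computation on each component, and a global alternative from equivariant degree theory, with the $1{:}1$ resonance $\nu_k=\nu_{n-k}$ handled by separating the contributions of the isotropy groups $\mathbb{\tilde{Z}}_{n}(k)$ and $\mathbb{\tilde{Z}}_{n}(n-k)$. However, there are genuine gaps. First, your explanation of the restriction ($n\leq 7$ with all $k$, versus $k\in\{1,2,n-2,n-1\}$ for $n\geq 8$) is wrong: it has nothing to do with "cleanly isolating extremal modes" in the degree computation. It is elementary spectral arithmetic: writing $s_k=k(n-k)/2$, the hypothesis $4r^{2}(1+r^{2})^{-2}<2-s_k/s_1$ has a strictly positive left-hand side, so it is satisfiable for some $r$ only when $2-s_k/s_1>0$; and this, by the explicit formula (\ref{vk}) obtained from the $2\times 2$ blocks $m_k(\nu)$, is exactly the condition for the normal frequency $\nu_k$ to be real and for the determinant $d_k(\nu)$ to change sign at $\nu_k$ (giving $\eta_k(\nu_k)=-1$). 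Checking $k(n-k)/(n-1)<2$ yields precisely the stated ranges of $k$. Since you defer the entire spectral computation (the blocks $B_k=s_1r^{-2}I+(s_1-s_k)r^{-2}R-4s_1(1+r^{2})^{-2}\mathrm{diag}(1,0)$ and the resulting Morse index analysis) as "the technical heart," and where you do speculate about it you misattribute the hypotheses, the proposal cannot yet connect the stated inequality to a nonzero degree.

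Second, you miss a structural point that distinguishes the sphere from the plane and dictates which degree theory is admissible: here $f$ is \emph{not} the gradient of a functional (the symplectic form has the conformal factor $4(1+|x_j|^{2})^{-2}$), so the gradient equivariant degree used in the planar problem is unavailable. One must instead verify that $f$ is orthogonal to the generators of the group action, i.e. $\langle f(x),\dot{x}\rangle_{L^{2}_{2\pi}}=0$ and $\langle f(x),\mathcal{J}x\rangle_{L^{2}_{2\pi}}=0$, and invoke the \emph{orthogonal} equivariant degree. Related to this, two degeneracies must be dealt with explicitly before any degree is defined: the kernel direction $\mathcal{J}\mathbf{a}$ generated by the $SO(2)$-orbit of the equilibrium (one needs the orbit to be hyperbolic, i.e. $4r^{2}(1+r^{2})^{-2}\neq 2-s_j/s_1$ for all $j$, and a sign computation of $M(0)$ on the orthogonal complement), and the higher harmonics: one needs $M(l\nu_k)$ invertible for $l\geq 2$, i.e. the non-resonance condition $l\nu_k\neq\nu_j$, which fails only for a countable set of radii accumulating at $2-s_k/s_1$ and must be argued away to recover the full statement of Theorem \ref{1}. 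None of these issues appear in your outline, and each is a place where the argument would otherwise fail.
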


We use the change of variables developed in \cite{GaIz11} to obtain the
isotypic decomposition. With this change of variables, we decompose the
Hessian in $2\times2$ blocks. Our computations give similar results to\ the
computations in \cite{Bo03} and \cite{Mo11}, where the nonlinear stability of
the polygonal relative equilibrium is analyzed. As a side consequence of our
approach, we recover the results about the linear stability of the polygonal
relative equilibrium on the sphere.

The bifurcation of periodic solutions is represented by a $G$-equivariant
problem; where the group%
\[
G=\mathbb{Z}_{n}\times SO(2)\times S^{1}%
\]
acts by permuting vortices, rotating positions and translating time. Theorem
\ref{1} is proved by using a $G$-equivariant degree theory that is orthogonal
to the generators of the group, developed in \cite{IzVi03} and \cite{BaKrSt06}%
. This approach is analogous to the one implemented for the plane in
\cite{GaIz12}. The \emph{global property} of the families means that the
periodic solutions $x=(x_{1},...,x_{n})$ form a continuum which has Sobolev
norm or period going to infinity, that either ends in a collision orbit or in
a bifurcation point of different equilibrium.

These results can be easily extended to vortices in any radially symmetric
surface, or to bodies on the sphere \cite{Di}. However, the analysis of the
spectrum for bodies is considerably more difficult to perform; refer to
\cite{GaIz13} for bodies on the plane.

A \emph{choreography} is a periodic solution of the $n$-vortex problem with
$n$ equal vortices following the same path. An analogous result for the
existence of choreographies along families of relative periodic solutions that
emerge from the polygonal relative equilibrium for the $n$ body problem, given
in \cite{Ch09} and \cite{CaDoGa}, is proven in Section 4, where we show that a
relative periodic solution for vortices is a choreography when
\[
\nu=\frac{m}{\ell}\omega~,
\]
with $\ell$ and $m$ are relatively prime such that $k\ell-m\in n\mathbb{Z}$.
These conditions on the frequency are satisfied for a dense set of parameters
$\nu$. In fact, choreographies that emanate from the polygonal relative
equilibrium in the plane and sphere have been computed numerically in
\cite{CaDoGa2}. Choreographies for $4$ vortices on the sphere have been found
in \cite{Bo05}

In Section 2 we define the equivariant properties of the bifurcation operator
and present a reduction to a finite number of Fourier components. In Section 3
we analyze the spectra for the isotypic components. In Section 4 we prove
Theorem \ref{1}. In Section 5 we prove that families of relative periodic
solutions contain choreographies.

\section{Setting up the problem}

From \cite{Mo13}, the system of equations for $n$ vortices on the sphere,
parameterized by the stereographic projection, has Hamiltonian%
\[
H(q)=-\frac{1}{2}\sum_{i<j}\ln\frac{\left\vert q_{j}-q_{i}\right\vert ^{2}%
}{\left(  1+\left\vert q_{j}\right\vert ^{2}\right)  \left(  1+\left\vert
q_{i}\right\vert ^{2}\right)  },
\]
and symplectic form,%
\[
\Omega=\sum_{j=1}^{n}\frac{2i}{\left(  1+\left\vert q_{j}\right\vert
^{2}\right)  ^{2}}dq_{j}\wedge d\bar{q}_{j}=\sum_{j=1}^{n}\frac{4}{\left(
1+\left\vert q_{j}\right\vert ^{2}\right)  ^{2}}dx_{j}\wedge dy_{j}\text{.}%
\]

Let the amended potential be
\[
V(q)=\omega G(q)+H(q),\qquad G(q)=2\sum_{j=1}^{n}\frac{\left\vert
q_{j}\right\vert ^{2}}{1+\left\vert q_{j}\right\vert ^{2}}.
\]
In rotating coordinates, $q_{j}(t)=e^{i\omega t}u_{j}(t)$, the equation for
$n$ vortices on the sphere is given by%
\begin{equation}
4\left(  1+\left\vert u_{j}\right\vert ^{2}\right)  ^{-2}i\dot{u}_{j}%
=\nabla_{u_{j}}V(u)\text{,} \label{Sp}%
\end{equation}
where%
\[
\nabla_{u_{j}}V(u)=4\left(  1+\left\vert u_{j}\right\vert ^{2}\right)
^{-2}\omega u_{j}-\sum_{i=1(i\neq j)}^{n}\left(  \frac{u_{j}-u_{i}}{\left\vert
u_{j}-u_{i}\right\vert ^{2}}-\frac{u_{j}}{1+\left\vert u_{j}\right\vert ^{2}%
}\right)
\]

Let%
\[
s_{1}=\sum_{j=1}^{n-1}\frac{1-e^{ij\zeta}}{\left\vert 1-e^{ij\zeta}\right\vert
^{2}}=\frac{n-1}{2}\text{.}%
\]
Equation (\ref{Sp}) for the $n$ vortices on the sphere has the polygonal equilibrium%

\begin{equation}
\mathbf{a}=(a_{1},...,a_{n}),\qquad a_{j}=re^{ij\zeta}~,
\end{equation}
when%
\[
\omega=\frac{1}{4}\left(  1+r^{2}\right)  ^{2}\left(  \frac{s_{1}}{r^{2}%
}-\left(  n-1\right)  \frac{1}{1+r^{2}}\right)  =s_{1}\left(  \frac{1-r^{4}%
}{4r^{2}}\right)  ~.
\]

Changing variables by $x(t)=u(t/\nu)$ (hereafter we use real coordinates for
$x$), the $2\pi/\nu$-periodic solutions of the differential equation for the
vortices are solutions of equation
\[
4\left(  1+\left\vert x_{j}\right\vert ^{2}\right)  ^{-2}\nu J\dot{x}%
_{j}=\nabla_{x_{j}}V(x).
\]
Let $\Psi=\{x\in\mathbb{R}^{2n}:x_{i}=x_{j}\}$ be the set of collision points.
Since $H_{2\pi}^{1}(\mathbb{R}^{2n})\subset C_{2\pi}^{0}(\mathbb{R}^{2n})$, we
define the open set of collision-free $2\pi$-periodic paths as%
\begin{equation}
H_{2\pi}^{1}(\mathbb{R}^{2n}\backslash\Psi)=\{x\in H_{2\pi}^{1}(\mathbb{R}%
^{2n}):x_{i}(t)\neq x_{j}(t)\}~.
\end{equation}
Then, $2\pi/\nu$-periodic solutions correspond to zeros of the bifurcation
operator%
\[
f(x,\nu):H_{2\pi}^{1}(\mathbb{R}^{2n}\backslash\Psi)\times\mathbb{R}%
^{+}\rightarrow L_{2\pi}^{2},
\]
with components%
\begin{equation}
f_{j}(x)=-\nu4\left(  1+\left\vert x_{j}\right\vert ^{2}\right)  ^{-2}J\dot
{x}_{j}+\nabla_{x_{j}}V(x).
\end{equation}

\begin{remark}
The analysis of bifurcation of periodic solutions on the sphere is represented
by an operator defined in the manifold $H_{2\pi}^{1}(\left(  S^{2}\right)
^{n}\backslash\Psi)$ with $\Psi$ the collision set. The stereographic
projection gives a chart of this manifold: $H_{2\pi}^{1}(\left(
\mathbb{R}^{2}\right)  ^{n}\backslash\Psi)\hookrightarrow H_{2\pi}^{1}(\left(
S^{2}\right)  ^{n}\backslash\Psi)$. In this paper we prove the global property
of the family only in the chart $H_{2\pi}^{1}(\left(  \mathbb{R}^{2}\right)
^{n}\backslash\Psi)$. While these results can be extended to consider the
global bifurcation in the whole manifold $H_{2\pi}^{1}(\left(  S^{2}\right)
^{n}\backslash\Psi)$, this would require to extend the definition of
equivariant degree to manifolds, a program that is out of the scope of our presentation.
\end{remark}

By using the stereographic projection of the sphere, we can analyze the
existence of periodic solutions of the $n$ vortices on the sphere analogously
to the plane \cite{GaIz12}.

\begin{definition}
Let $\zeta$ be the permutation of $\{1,...,n\}$ given by $\zeta(j)=j+1$
modulus $n$. We define the action of $\left(  \zeta,\theta\right)
\in\mathbb{Z}_{n}\times SO(2)$ in $x\in H_{2\pi}^{1}(\left(  \mathbb{R}%
^{2}\right)  ^{n}\backslash\Psi)$ by
\begin{equation}
\rho(\zeta)x_{j}=x_{\zeta(j)},\qquad\rho(\theta)x_{j}=e^{-J\theta}%
x_{j}\text{,}%
\end{equation}
and the action of $\varphi\in S^{1}$ by
\begin{equation}
\rho(\varphi)x(t)=x(t+\varphi).
\end{equation}

\end{definition}

The potential $V$ is invariant under the action of $\mathbb{Z}_{n}\times
SO(2)$ because the vortices have the same circulation. Then the gradient
$\nabla V$ is $\mathbb{Z}_{n}\times SO(2)$-equivariant. Since the equation is
autonomous, the operator $f$ is $G$-equivariant under the action of the
abelian group $G=\mathbb{Z}_{n}\times SO(2)\times S^{1}$.

The infinitesimal generators of groups $S^{1}$ and $SO(2)$ are%
\[
\text{ }Ax=\frac{\partial}{\partial\varphi}|_{\varphi=0}x(t+\varphi)=\dot
{x}\text{ and }A_{1}x=\frac{\partial}{\partial\theta}|_{\theta=0}%
e^{-\mathcal{J\theta}}x=-\mathcal{J}x\text{,}%
\]
where
\[
\mathcal{J}=\mathrm{diag}(J,...,J).
\]
Since $V$ is $SO(2)$-invariant, the gradient $\nabla V(x)$ must be orthogonal
to the generator $A_{1}x$, $\left\langle \nabla V,\mathcal{J}x\right\rangle
_{\mathbb{R}^{2n}}=0$. The operator $f$ is $G$-orthogonal to the generators
due to equalities%
\[
\left\langle f(x),\dot{x}\right\rangle _{L_{2\pi}^{2}}=-\nu\sum_{j=1}%
^{n}\left\langle \frac{1}{4}\left(  1+\left\vert x_{j}\right\vert ^{2}\right)
^{2}J\dot{x}_{j},\dot{x}_{j}\right\rangle _{L_{2\pi}^{2}}+V(x(t))|_{0}^{2\pi
}=0\text{,}%
\]
and
\[
\left\langle f(x),\mathcal{J}x\right\rangle _{L_{2\pi}^{2}}=-\nu G\left(
x\right)  |_{0}^{2\pi}+\int_{0}^{2\pi}\left\langle \nabla V,\mathcal{J}%
x\right\rangle ~dt=0,
\]
because%

\[
\partial_{t}G(x)=\sum_{j=1}^{n}4\left(  1+\left\vert x_{j}\right\vert
^{2}\right)  ^{-2}\left\langle x_{j},\dot{x}_{j}\right\rangle ~.
\]

\begin{remark}
Unlike the case of the plane \cite{GaIz12}, the operator $f$ cannot be
represented as the gradient of a functional on the sphere. Then the operator
$f$ is a genuine example of an orthogonal operator that is not the gradient of
a functional. The Hopf fibration can be used to lift the Hamiltonian system
from the sphere $S^{2}$ to $S^{3}$ such that the system in $S^{3}$ has a
Lagrangian representation \cite{Va13}, i.e. the operator in $S^{3}$ has a
gradient representation. However, the analysis of the gradient in $S^{3}$ has
additional challenges.
\end{remark}

Define $\mathbb{\tilde{Z}}_{n}$ as the subgroup generated by $(\zeta,\zeta
)\in\mathbb{Z}_{n}\times S^{1}$ with $\zeta=2\pi/n\in S^{1}$. Since the action
of $(\zeta,\zeta)$ leaves the equilibrium $\mathbf{a}$ fixed, the isotropy
group of $\mathbf{a}$\textbf{ }is%
\[
G_{\mathbf{a}}=\mathbb{\tilde{Z}}_{n}\times S^{1}\text{.}%
\]
Thus, the orbit of $\mathbf{a}$ is isomorphic to the group $G/G_{\mathbf{a}%
}=SO(2)$. In fact, the orbit consists of rotations of the equilibrium. As a
consequence, the generator of the orbit $A_{1}\mathbf{a}=-\mathcal{J}%
\mathbf{a}$ must be in the kernel of $D^{2}f(\mathbf{a})$.

In order to apply the orthogonal degree developed in \cite{IzVi03}, we need to
make a reduction of the bifurcation operator to finite space. Let $Px$ be the
projection $Px=\sum_{\left\vert l\right\vert \leq p}x_{l}e^{ilt}$. Define
$x_{1}=Px$, $x_{2}=(I-P)x$, $f_{1}=Pf$ and $f_{2}=(I-P)f$. Since
$\partial_{x_{2}}f_{2}(x_{1}+x_{2};\nu)$ is uniformly invertible in a closed
bounded set inside $H_{2\pi}^{1}(\mathbb{R}^{2n}\backslash\Psi)\times
\mathbb{R}^{+}$, we can solve $x_{2}(x_{1},\nu)$ from $f_{2}(x_{1}+x_{2}%
;\nu)=0$ by using the global implicit function theorem in this closed bounded
set, see \cite{GaIz12} for details.

The bifurcation function
\[
\phi(x_{1};\nu)=f_{1}(x_{1},x_{2}(x_{1},\nu);\nu),
\]
has the same equivariant and orthogonal properties that $f$ (Chapter 1 in
\cite{IzVi03}). Moreover, the linearization of the bifurcation function at
equilibrium $\mathbf{a}$ is%
\[
\phi^{\prime}(\mathbf{a})x_{1}=\sum_{\left\vert l\right\vert \leq p}\left(
-4\left(  1+r^{2}\right)  ^{-2}\nu(li\mathcal{J)}+D^{2}V(\mathbf{a})\right)
x_{l}e^{ilt}\text{.}%
\]
Since the bifurcation operator is real, the linearization of the bifurcation
function is determined by the blocks $M(l\nu)$ for $l\in\{0,...,p\}$, where
$M(\nu)$ is the matrix%

\begin{equation}
M(\nu)=-4\left(  1+r^{2}\right)  ^{-2}\nu(i\mathcal{J)}+D^{2}V(\mathbf{a}%
)\text{.} \label{M}%
\end{equation}
The matrix $M(l\nu)$ represents the $l$th Fourier component of the linearized
equation at equilibrium $\mathbf{a}$.

\section{Irreducible representations}

To compute the Hessian of $V$, we define $A_{ij}$ as the $2\times2$ minors of
$D^{2}V($\textbf{$a$}$)\in M_{\mathbb{R}}(2n)$, i.e.,
\[
D^{2}V(\mathbf{a})=(A_{ij})_{ij=1}^{n}\text{.}%
\]

\begin{proposition}
We have for $j\in\{1,...,n-1\}$ that%
\[
A_{nj}=\frac{1}{\left(  2r\sin(j\zeta/2)\right)  ^{2}}\left(  e^{jJ\zeta
}R\right)  ~,\qquad A_{nn}=A-\sum_{j=0}^{n-1}A_{nj}~,
\]
where $R=\mathrm{diag}(1,-1)$ and
\[
A=s_{1}r^{-2}I-4s_{1}\left(  1+r^{2}\right)  ^{-2}\mathrm{diag}(1,0)~.
\]

\end{proposition}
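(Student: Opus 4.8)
The plan is to separate $V=\omega G+H$ into its translation-invariant and translation-dependent parts. Using that each factor $\ln(1+|u_k|^2)$ occurs $n-1$ times in the sum defining $H$, I would first rewrite
\[
V(u)=\omega G(u)-\tfrac12\sum_{i<j}\ln|u_i-u_j|^2+\tfrac{n-1}{2}\sum_{k=1}^{n}\ln(1+|u_k|^2).
\]
Only the middle (interaction) sum couples distinct vortices, so every off-diagonal block $A_{nj}$ with $j\neq n$ comes from it alone, whereas the diagonal block $A_{nn}$ collects contributions from all three terms.

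For the off-diagonal blocks I would record the elementary Hessian, for $w\in\mathbb{R}^2$,
\[
D^2_w\ln|w|^2=\frac{2}{|w|^2}\Big(I-\frac{2ww^{T}}{|w|^2}\Big),
\]
from which the $u_nu_j$ block of $-\tfrac12\ln|u_n-u_j|^2$ equals $|w|^{-2}(I-2\hat w\hat w^{T})$ with $w=a_n-a_j$ and $\hat w=w/|w|$. At the polygon $a_n=r$ and $a_j=re^{ij\zeta}$, so $w=r(1-e^{ij\zeta})$ and $|w|^2=r^2|1-e^{ij\zeta}|^2=(2r\sin(j\zeta/2))^2$, which is already the announced prefactor. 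Writing $\hat w=(\cos\beta,\sin\beta)^{T}$ with $\beta=\arg(1-e^{ij\zeta})=(j\zeta-\pi)/2$, the identity $2\hat w\hat w^{T}=I+e^{2\beta J}R$ gives $I-2\hat w\hat w^{T}=-e^{2\beta J}R=-e^{(j\zeta-\pi)J}R=e^{jJ\zeta}R$, since $e^{-\pi J}=-I$. This yields $A_{nj}=(2r\sin(j\zeta/2))^{-2}e^{jJ\zeta}R$.

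For the diagonal block the structural point is that the interaction sum $P(u)=-\tfrac12\sum_{i<j}\ln|u_i-u_j|^2$ is invariant under the simultaneous shift $u_k\mapsto u_k+c$; differentiating $\sum_k\nabla_{u_k}P\equiv0$ once more in $u_n$ shows that the block row sums of its Hessian vanish, so the $u_nu_n$ block of $P$ equals $-\sum_{j\neq n}A_{nj}$. It then remains to add the two translation-dependent self contributions. Evaluating $D^2_u\,\tfrac{|u|^2}{1+|u|^2}$ and $D^2_u\ln(1+|u|^2)$ at $u=(r,0)$, weighted by $2\omega$ and $\tfrac{n-1}{2}$ respectively, produces a matrix $\alpha I+\gamma\,\mathrm{diag}(1,0)$; substituting the equilibrium value $\omega=s_1(1-r^4)/4r^2$ together with $n-1=2s_1$ collapses the coefficients to $\alpha=s_1r^{-2}$ and $\gamma=-4s_1(1+r^2)^{-2}$, that is, exactly to $A$. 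Hence $A_{nn}=A-\sum_{j}A_{nj}$.

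The two places demanding care are the reflection identification, where one must track the angle so that $I-2\hat w\hat w^{T}$ becomes precisely $e^{jJ\zeta}R$ (the factor $e^{-\pi J}=-I$ being the only subtlety), and the self term, whose $I$- and $\mathrm{diag}(1,0)$-coefficients reduce to the clean form only after inserting the exact value of $\omega$. I expect the main obstacle to be keeping the real-coordinate second-derivative formulas consistent with the complex notation used for the positions $a_j$; once the translation-invariance reduction is in place, the remaining computation is essentially forced.
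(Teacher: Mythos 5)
Your proof is correct and follows essentially the same route as the paper's: the off-diagonal blocks come only from the interaction term, whose Hessian you evaluate at the polygon; the diagonal block is recovered from translation invariance of that term (the paper uses the equivalent pairwise fact that $-\frac{x_n-x_j}{|x_n-x_j|^2}$ depends only on the difference, so $D_{x_n}$ of it equals $-A_{nj}$); and the self-term matrix $A$ is simplified by inserting the equilibrium value of $\omega$, exactly as in the paper. The only cosmetic difference is that you extract the form $e^{jJ\zeta}R$ via the projector identity $2\hat w\hat w^{T}=I+e^{2\beta J}R$ with $\beta=\arg(1-e^{ij\zeta})$, whereas the paper writes out the $2\times2$ matrix entrywise and applies half-angle identities; both are the same computation.
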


\begin{proof}
Notice that $\ $%
\[
\nabla_{x_{n}}V(x)=4\omega\frac{x_{n}}{\left(  1+\left\vert x_{n}\right\vert
^{2}\right)  ^{2}}+(n-1)\frac{x_{n}}{1+\left\vert x_{n}\right\vert ^{2}}%
-\sum_{i\neq n}\frac{x_{n}-x_{i}}{\left\vert x_{n}-x_{i}\right\vert ^{2}%
}\text{.}%
\]
Therefore, the minor $A_{nj}$ is%
\[
A_{nj}=\left.  D_{x_{j}}\nabla_{x_{n}}V(x)\right\vert _{x_{j}=a_{j}}=\left.
D_{x_{j}}\left(  -\frac{x_{n}-x_{j}}{\left\vert x_{n}-x_{j}\right\vert ^{2}%
}\right)  \right\vert _{x_{j}=a_{j}}\text{.}%
\]
Given that $a_{j}=(r\cos j\zeta,r\sin j\zeta)$, then%
\[
A_{nj}=\frac{1}{\left\vert a_{n}-a_{j}\right\vert ^{2}}I-\frac{2r^{2}%
}{\left\vert a_{n}-a_{j}\right\vert ^{4}}\left(
\begin{array}
[c]{cc}%
(1-\cos j\zeta)^{2} & -(1-\cos j\zeta)\sin j\zeta\\
-(1-\cos j\zeta)\sin j\zeta & (\sin j\zeta)^{2}%
\end{array}
\right)  .
\]
Since%
\[
\left\vert a_{n}-a_{j}\right\vert ^{2}=2r^{2}(1-\cos j\zeta)=4r^{2}\sin
^{2}(j\zeta/2)\text{,}%
\]
and $\sin^{2}j\zeta=(1-\cos j\zeta)(1+\cos j\zeta)$, then%
\[
A_{nj}=\frac{1}{4r^{2}\sin^{2}(j\zeta/2)}\left(  I-\left(
\begin{array}
[c]{cc}%
1-\cos j\zeta & -\sin j\zeta\\
-\sin j\zeta & 1+\cos j\zeta
\end{array}
\right)  \right)  \text{.}%
\]

Let
\[
A=D_{x_{n}}\left.  \left(  4\omega\frac{x_{n}}{\left(  1+\left\vert
x_{n}\right\vert ^{2}\right)  ^{2}}+(n-1)\frac{x_{n}}{1+\left\vert
x_{n}\right\vert ^{2}}\right)  \right\vert _{x_{n}=a_{n}},
\]
then the matrix $A_{nn}$\ satisfies%
\[
A_{nn}=A-\sum_{j\neq n}D_{x_{n}}\left(  -\frac{x_{n}-x_{j}}{\left\vert
x_{n}-x_{j}\right\vert ^{2}}\right)  =A-\sum_{j\neq n}A_{nj}\text{.}%
\]
It only remains to compute%
\[
A=\left(  \frac{4\omega}{\left(  1+r^{2}\right)  ^{2}}+\frac{n-1}{1+r^{2}%
}\right)  I-2r^{2}\left(  \frac{8\omega}{\left(  1+r^{2}\right)  ^{3}}%
+\frac{n-1}{\left(  1+r^{2}\right)  ^{2}}\right)  \mathrm{diag}(1,0)\text{.}%
\]
By the definition of $\omega$, we have
\[
4\omega\left(  1+r^{2}\right)  ^{-2}+(n-1)\left(  1+r^{2}\right)  ^{-1}%
=\frac{n-1}{2}r^{-2}~,
\]
and
\[
8\omega\left(  1+r^{2}\right)  ^{-3}+(n-1)\left(  1+r^{2}\right)
^{-2}=(n-1)r^{-2}\left(  1+r^{2}\right)  ^{-2}\text{.}%
\]
Then $A=s_{1}r^{-2}I-4s_{1}\left(  1+r^{2}\right)  ^{-2}\mathrm{diag}(1,0).$
\end{proof}

In order to apply the orthogonal degree we need to find the isotypic
decomposition for the action of $\mathbb{\tilde{Z}}_{n}<\mathbb{Z}_{n}\times
SO(2)$.

\begin{definition}
For $k\in\{1,...,n\}$, we define isomorphisms $T_{k}:\mathbb{C}^{2}\rightarrow
W_{k}$ by%
\[
T_{k}(w)=(n^{-1/2}e^{(ikI+J)\zeta}w,...,n^{-1/2}e^{n(ikI+J)\zeta}w),
\]
where
\[
W_{k}=\{(e^{(ikI+J)\zeta}w,...,e^{n(ikI+J)\zeta}w):w\in\mathbb{C}^{2}%
\}\subset\mathbb{C}^{2n}.
\]

\end{definition}

In \cite{GaIz11} is proved that subspaces $W_{k}$ are the isotypic components
under the action of $\mathbb{\tilde{Z}}_{n}$, where the action of
$(\zeta,\zeta)\in\mathbb{\tilde{Z}}_{n}$ on the space $W_{k}$ is
\[
\rho(\zeta,\zeta)=e^{ik\zeta}\text{.}%
\]
Since subspaces $W_{k}$ are orthogonal, the linear transformation%
\[
Pw=\sum_{k=1}^{n}T_{k}(w_{k}),\qquad w=(w_{1},...,w_{n})\in\mathbb{C}^{2n},
\]
is orthogonal. Given that $P$ rearranges the coordinates of the isotypic
decomposition, we have, from Schur's lemma,%
\[
P^{-1}D^{2}V(\mathbf{a})P=\mathrm{diag}(B_{1},...,B_{n})\text{,}%
\]
where $B_{k}$ are matrices which satisfy $D^{2}V(\mathbf{a})T_{k}%
(w)=T_{k}(B_{k}w)$.

In Proposition 7 of \cite{GaIz11} is found that the blocks $B_{k}$ are given
by%
\begin{equation}
B_{k}=\sum_{j=1}^{n}A_{nj}e^{j(ikI+J)\zeta}\text{ for }k\in\{1,...,n\}\text{.}%
\end{equation}
This formula has been successfully applied to study the bifurcation of
periodic solutions from the polygonal equilibrium for bodies and vortices in
\cite{GaIz13} and \cite{GaIz12}, respectively. In the plane, for the polygonal
relative equilibrium with radius $r=1$, the computation in \cite{GaIz12} shows
that $B_{k}=s_{1}I+\left(  s_{1}-s_{k}\right)  R$, where%
\begin{equation}
s_{k}:=\frac{k(n-k)}{2}.
\end{equation}
In the next proposition, we calculate $B_{k}$ for the case of $n$ vortices on
the sphere.

\begin{proposition}
We have%
\[
B_{k}=s_{1}r^{-2}I+\left(  s_{1}-s_{k}\right)  r^{-2}R-4s_{1}\left(
1+r^{2}\right)  ^{-2}\mathrm{diag}(1,0)~.
\]

\end{proposition}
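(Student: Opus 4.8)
The plan is to substitute the explicit minors $A_{nj}$ from the first proposition into the formula $B_{k}=\sum_{j=1}^{n}A_{nj}e^{j(ikI+J)\zeta}$ and reduce everything to a single scalar trigonometric sum. First I would isolate the diagonal term: since $\zeta=2\pi/n$ gives $e^{n(ikI+J)\zeta}=e^{2\pi ik}\,e^{2\pi J}=I$, the $j=n$ summand is simply $A_{nn}$. Using $A_{nn}=A-\sum_{j=1}^{n-1}A_{nj}$, the formula rearranges to
\[
B_{k}=A+\sum_{j=1}^{n-1}A_{nj}\left(e^{j(ikI+J)\zeta}-I\right).
\]

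Next I would exploit the structure $A_{nj}=c_{j}\,e^{jJ\zeta}R$ with $c_{j}=(2r\sin(j\zeta/2))^{-2}$, together with the commutation rule $Re^{J\theta}=e^{-J\theta}R$ (equivalently $RJ=-JR$). Splitting $e^{j(ikI+J)\zeta}=e^{ijk\zeta}e^{jJ\zeta}$, the identity $e^{jJ\zeta}Re^{jJ\zeta}=R$ collapses the first piece, so each summand becomes $c_{j}\!\left(e^{ijk\zeta}R-e^{jJ\zeta}R\right)$. Expanding $e^{jJ\zeta}R=\cos(j\zeta)R+\sin(j\zeta)JR$ and $e^{ijk\zeta}=\cos(jk\zeta)+i\sin(jk\zeta)$, I would then use the reflection symmetry $j\mapsto n-j$, under which $c_{n-j}=c_{j}$, the cosines are invariant, and the sines change sign; this kills both the imaginary part and the $JR$-part, leaving the real diagonal expression
\[
B_{k}=A+\left(\sum_{j=1}^{n-1}c_{j}\left[\cos(jk\zeta)-\cos(j\zeta)\right]\right)R.
\]

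It then remains to evaluate the scalar coefficient. Writing $4\sin^{2}(j\zeta/2)=2(1-\cos j\zeta)$ and $\cos(jk\zeta)-\cos(j\zeta)=(1-\cos j\zeta)-(1-\cos jk\zeta)$, the coefficient becomes $r^{-2}\big(s_{1}-\tfrac12\sum_{j=1}^{n-1}\tfrac{1-\cos jk\zeta}{1-\cos j\zeta}\big)$. Recalling $A=s_{1}r^{-2}I-4s_{1}(1+r^{2})^{-2}\mathrm{diag}(1,0)$, the claimed formula follows once I establish
\[
\sum_{j=1}^{n-1}\frac{1-\cos(jk\zeta)}{1-\cos(j\zeta)}=k(n-k)=2s_{k}.
\]

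This last identity is the main obstacle and the only genuinely nontrivial point. I would prove it by writing the summand as $\sin^{2}(jk\zeta/2)/\sin^{2}(j\zeta/2)=D_{k}(j\pi/n)^{2}$, where $D_{k}(x)=\sin(kx)/\sin(x)=\sum_{m=0}^{k-1}e^{i(2m-(k-1))x}$ is a Dirichlet-type kernel. Squaring yields $\sum_{s}N_{k}(s)e^{isj\pi/n}$, where $s$ runs over even integers and $N_{k}(s)$ counts pairs of exponents with sum $s$, so that $N_{k}(0)=k$ and $\sum_{s}N_{k}(s)=k^{2}$. Summing $e^{isj\pi/n}=e^{i(s/2)j\zeta}$ over $j$ and using that $\sum_{j=0}^{n-1}e^{itj\zeta}$ vanishes unless $n\mid t$ — which for $1\le k\le n$ occurs only at $s=0$ — collapses the double sum to $nN_{k}(0)-\sum_{s}N_{k}(s)=kn-k^{2}=k(n-k)$. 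Since this reproduces exactly the plane computation of \cite{GaIz12} (there with $r=1$ and without the amended-potential term $A$), the structure of the argument is already validated; the sphere merely introduces the overall factor $r^{-2}$ and the extra $\mathrm{diag}(1,0)$ contribution carried by $A$.
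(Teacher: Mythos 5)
Your proof is correct and follows essentially the same route as the paper's: isolate $A$ via $A_{nn}=A-\sum_{j=1}^{n-1}A_{nj}$, use $Re^{J\theta}=e^{-J\theta}R$ to collapse each summand to $c_{j}\left(e^{ijk\zeta}R-e^{jJ\zeta}R\right)$, pair $j\leftrightarrow n-j$ to keep only the cosine terms, and reduce to the identity $\sum_{j=1}^{n-1}\sin^{2}(jk\zeta/2)/\sin^{2}(j\zeta/2)=k(n-k)$. The only difference is that you actually prove this last identity (via the Dirichlet-kernel expansion and orthogonality of roots of unity, which is correct), whereas the paper merely asserts it; this makes your version more self-contained but does not change the approach.
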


\begin{proof}
Since $A_{nn}=A-\sum_{j=0}^{n-1}A_{nj}$, then%
\[
B_{k}=A+\sum_{j=1}^{n-1}A_{nj}(e^{j(ikI+J)\zeta}-I).
\]
Since $A_{nj}=\left(  2r\sin(j\zeta/2)\right)  ^{-2}\left(  e^{jJ\zeta
}R\right)  $, then
\[
B_{k}=A+\sum_{j=1}^{n-1}\frac{1}{\left(  2r\sin(j\zeta/2)\right)  ^{2}%
}(Re^{jik\zeta}-e^{jJ\zeta}R).
\]
Using the equalities $e^{-(jJ\zeta)}+e^{(jJ\zeta)}=2I\cos j\zeta$ and
$e^{jik\zeta}+e^{-jik\zeta}=2\cos jk\zeta$, we conclude that
\[
B_{k}=A+\sum_{j=1}^{n-1}\frac{\cos jk\zeta-\cos j\zeta}{\left(  2r\sin
(j\zeta/2)\right)  ^{2}}R.
\]
Since $4\sin^{2}(j\zeta/2)=2(1-\cos j\zeta)$, then%
\[
B_{k}=A+\sum_{j=1}^{n-1}\frac{-2\sin^{2}(jk\zeta/2)+2\sin^{2}(j\zeta
/2)}{\left(  2r\sin(j\zeta/2)\right)  ^{2}}R~.
\]
The result follows from equality%
\[
s_{k}:=\frac{k(n-k)}{2}=\frac{1}{2}\sum_{j=1}^{n-1}\frac{\sin^{2}(jk\zeta
/2)}{\sin^{2}(j\zeta/2)}~\text{.}%
\]

\end{proof}

For the $0$th Fourier component, the real matrix $M(0)=D^{2}V(\mathbf{a})\in
M_{\mathbb{R}}(2n\mathbb{)}$ is equivalent to the matrix%
\[
\mathrm{diag}(B_{1},...,B_{n/2},B_{n})\text{, }%
\]
where $B_{k}\in M_{\mathbb{C}}(2\mathbb{)}$ for $k\in\lbrack1,n/2)\cap
\mathbb{N}$ and $B_{n/2},B_{n}\in M_{\mathbb{R}}(2\mathbb{)}$.

For the $l$th Fourier components, the matrix $M(l\nu)$ in the new coordinates
is given by%
\[
P^{-1}M(l\nu)P=\mathrm{diag}(m_{1}(l\nu),...,m_{n}(l\nu))\text{,}%
\]
where blocks $m_{k}(\nu)\in M_{\mathbb{C}}(2\mathbb{)}$ are%
\[
m_{k}(\nu)=-\nu(iJ\mathcal{)}4\left(  1+r^{2}\right)  ^{-2}+B_{k}\text{.}%
\]

The action of $(\zeta,\zeta,\varphi)\in\mathbb{\tilde{Z}}_{n}\times S^{1}$ in
$W_{k}$ for the $l$th Fourier mode is given by
\[
\rho(\zeta,\zeta,\varphi)w_{k}=e^{ik\zeta}e^{il\varphi}w_{k}.
\]

\section{Bifurcation theorem}

\begin{proposition}
For
\begin{equation}
4r^{2}\left(  1+r^{2}\right)  ^{-2}<2-s_{k}/s_{1}, \label{in}%
\end{equation}
we define%
\begin{equation}
\nu_{k}=\frac{\left(  1+r^{2}\right)  ^{2}}{4r^{2}}\left[  s_{k}\left(
2s_{1}-s_{k}-4s_{1}r^{2}\left(  1+r^{2}\right)  ^{-2}\right)  \right]  ^{1/2}.
\label{vk}%
\end{equation}
Therefore, the Morse index $n_{k}(\nu)$ of $m_{k}(\nu)$ changes at $\nu_{k}$
and%
\[
\eta_{k}(\nu_{k})=\lim_{\varepsilon\rightarrow0}\left(  n_{k}(\nu
_{k}-\varepsilon)-n_{k}(\nu_{k}+\varepsilon)\right)  =-1.
\]

\end{proposition}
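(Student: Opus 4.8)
The plan is to reduce the entire statement to the spectral analysis of the single $2\times2$ block $m_k(\nu)=-\nu(iJ)4(1+r^2)^{-2}+B_k$. First I would substitute the explicit form of $B_k$ from the preceding proposition; since $R=\mathrm{diag}(1,-1)$ and $\mathrm{diag}(1,0)$ are diagonal, $B_k$ is the real diagonal matrix $\mathrm{diag}(b_{11},b_{22})$ with
\[
b_{11}=(2s_1-s_k)r^{-2}-4s_1(1+r^2)^{-2},\qquad b_{22}=s_kr^{-2}.
\]
The key observation is that $iJ=\left(\begin{smallmatrix}0&-i\\i&0\end{smallmatrix}\right)$ is Hermitian, so $m_k(\nu)$ is a Hermitian $2\times2$ matrix with real eigenvalues, and its Morse index $n_k(\nu)$ is simply the number of negative eigenvalues. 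I would then read off the two invariants that control the signs: the trace $\operatorname{tr}m_k(\nu)=b_{11}+b_{22}$, which is independent of $\nu$, and the determinant $\det m_k(\nu)=b_{11}b_{22}-16\nu^2(1+r^2)^{-4}$.

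Next I would use the hypothesis (\ref{in}) to fix the signs. Rewriting (\ref{in}) as $4s_1r^2(1+r^2)^{-2}<2s_1-s_k$ shows at once that the bracket $2s_1-s_k-4s_1r^2(1+r^2)^{-2}$ is positive; this is exactly the quantity appearing under the square root in (\ref{vk}), and it forces both $b_{11}>0$ and $b_{22}=s_kr^{-2}>0$ (using $s_k>0$ for $k\in\{1,\dots,n-1\}$). Hence the trace is strictly positive for every $\nu$, while the determinant is a strictly decreasing function of $\nu^2$ that is positive at $\nu=0$ and tends to $-\infty$. Consequently $\det m_k(\nu)$ has a unique positive zero; solving $16\nu^2(1+r^2)^{-4}=b_{11}b_{22}$ and extracting a factor $r^{-4}$ from $b_{11}b_{22}$ reproduces precisely the formula (\ref{vk}) for $\nu_k$.

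With the signs in hand the conclusion follows from the trace--determinant dichotomy for Hermitian $2\times2$ matrices. For $\nu<\nu_k$ the determinant and trace are both positive, so both eigenvalues are positive and $n_k(\nu)=0$; for $\nu>\nu_k$ the determinant is negative, so the eigenvalues have opposite signs and $n_k(\nu)=1$. At $\nu=\nu_k$ itself $\det m_k=0$ while $\operatorname{tr}m_k>0$, which rules out a double zero and shows the crossing is \emph{simple}: exactly one eigenvalue passes through zero (two would force $\operatorname{tr}=0$), and the derivative $-32\nu_k(1+r^2)^{-4}$ of $\det m_k$ is nonzero there. Therefore exactly one negative eigenvalue is created and $\eta_k(\nu_k)=n_k(\nu_k-\varepsilon)-n_k(\nu_k+\varepsilon)=0-1=-1$. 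The algebra being routine, the points genuinely needing care are confirming the Hermitian normal form and handling the real block $k=n/2$ for $n$ even, where $W_k$ carries a real rather than complex representation; I would treat it via the same complexified block and note that $s_{n/2}>0$ still keeps $b_{22}>0$, so no sign is lost and the identical determinant computation applies.
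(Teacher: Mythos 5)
Your proof is correct and takes essentially the same route as the paper: both analyze the Hermitian $2\times2$ block $m_k(\nu)$ through its $\nu$-independent positive trace and its determinant, which decreases in $\nu^2$ and vanishes only at $\nu_k$ under (\ref{in}), giving Morse index $0$ before the crossing and $1$ after, hence $\eta_k(\nu_k)=-1$. Your additional remarks (the simplicity of the crossing and the real block $k=n/2$) are harmless refinements that the paper's argument does not need.
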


\begin{proof}
The determinant of
\[
m_{k}(\nu)=\left(
\begin{array}
[c]{cc}%
\left(  2s_{1}-s_{k}\right)  r^{-2}-4s_{1}\left(  1+r^{2}\right)  ^{-2} &
i4\nu\left(  1+r^{2}\right)  ^{-2}\\
-i4\nu\left(  1+r^{2}\right)  ^{-2} & s_{k}r^{-2}%
\end{array}
\right)  \text{,}%
\]
is%
\[
d_{k}(\nu)=-\nu^{2}16\left(  1+r^{2}\right)  ^{-4}+r^{-4}s_{k}\left(
2s_{1}-s_{k}-4s_{1}r^{2}\left(  1+r^{2}\right)  ^{-2}\right)  .
\]
This determinant changes sign only at values $\pm\nu_{k}$ when (\ref{in})
holds. The trace of $m_{k}(\nu)$ is%
\[
T_{k}(\nu)=2s_{1}r^{-2}-4s_{1}\left(  1+r^{2}\right)  ^{-2}=2s_{1}\left(
r^{4}+1\right)  r^{-2}\left(  1+r^{2}\right)  ^{-2}>0.
\]
Since $T_{k}(0)>0$ and $d_{k}(0)>0$ when (\ref{in}) holds, the Morse index at
$\nu=0$ is $n_{k}(0)=0$. Since $d_{k}(\infty)<0$, then $n_{k}(\infty)=1$.
Given that $n_{k}(\nu)$ changes only at the positive value $\nu_{k}$, then
\[
\eta(\nu_{k})=n_{k}(0)-n_{k}(\infty)=-1.
\]

\end{proof}

Note that $2-s_{k}/s_{1}\geq0$ for all $k\in\{1,...,n-1\}$ if $n\leq7$ and for
$k\in\{1,2,n-2,n-1\}\ $if $n\geq8$, i.e. the frequencies $\nu_{k}$ are real
only in these cases. We say that the frequency $\nu_{k}$ is
\emph{non-resonant} if $l\nu_{k}\neq\nu_{j}$ for $j\neq k$ and $l\geq2$. The
proof of Theorem \ref{1} is consequence of the following theorem.

\begin{theorem}
Assume that $r$ satisfies $4r^{2}\left(  1+r^{2}\right)  ^{-2}\neq
2-s_{k}/s_{1}$ for all $k\in\{1,...,n-1\}$. For each $k\in\{1,...,n-1\}$ such
that $\nu_{k}$ is non-resonant, the operator $f(x,\nu)$ has a global
bifurcation of zeros $(x,\nu)$ from $\left(  \mathbf{a},\nu_{k}\right)  $ with
symmetries%
\[
x_{j}(t)=e^{j\zeta J}x_{n}(t+jk\zeta)~.
\]

\end{theorem}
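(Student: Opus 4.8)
The plan is to apply the global bifurcation theorem for $G$-orthogonal maps of \cite{IzVi03} and \cite{BaKrSt06} to the reduced function $\phi$, after restricting to the fixed-point subspace that encodes the prescribed symmetry, and to read off the global alternative from the Rabinowitz-type structure built into the orthogonal degree. First I would identify the relevant subgroup $H$: the symmetry $x_j(t)=e^{j\zeta J}x_n(t+jk\zeta)$ is exactly invariance under $\kappa=(\zeta^{-1},-\zeta,k\zeta)\in G$, which acts by $[\rho(\kappa)x]_j(t)=e^{J\zeta}x_{j-1}(t+k\zeta)$; iterating the fixed-point relation $x_j(t)=e^{J\zeta}x_{j-1}(t+k\zeta)$ recovers the stated symmetry, so $H=\langle\kappa\rangle\cong\mathbb{Z}_n$ and $\mathrm{Fix}(H)$ is precisely the space of such paths. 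One checks directly that $\mathbf{a}\in\mathrm{Fix}(H)$, and since $G$ is abelian the one-parameter subgroups of spatial rotation and time translation preserve $\mathrm{Fix}(H)$; on the critical mode these are tied together into one effective circle (a rotating-wave symmetry), so the restricted operator stays orthogonal to its generators and its nontrivial zeros come in circles, i.e. as relative periodic solutions.

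Next I would determine which Fourier/isotypic blocks survive on $\mathrm{Fix}(H)$. Since $\kappa$ acts on the $l$-th Fourier component of $W_j$ by $e^{i(lk-j)\zeta}$, the subspace $\mathrm{Fix}(H)$ retains exactly the modes with $j\equiv lk\pmod n$. For $l=1$ this selects $W_k$ and for $l=-1$ it selects $W_{n-k}$; these are complex conjugates forming a single real block, which becomes singular precisely at $\nu=\nu_k$. Crucially, the other resonant pairing $(W_{n-k},l=1)$ and $(W_k,l=-1)$ responsible for the $1{:}1$ resonance does not lie in $\mathrm{Fix}(H)$ when $k\neq n/2$, so the restriction resolves the resonance and makes the critical kernel two real dimensional (one complex dimension); the exceptional $k=n/2$ is already the non-resonant real block. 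The non-resonance hypothesis $l\nu_k\neq\nu_j$ for $l\geq 2$, together with the standing assumption $4r^2(1+r^2)^{-2}\neq 2-s_k/s_1$, guarantees that no higher Fourier block surviving on $\mathrm{Fix}(H)$ is singular at $\nu_k$; the only remaining degeneracy is the $l=0$ block $B_n$, whose kernel is the $SO(2)$-orbit direction $\mathcal{J}\mathbf{a}$ and which is exactly what the orthogonal degree quotients out.

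Then I would compute the orthogonal bifurcation invariant. By the preceding Proposition the Morse index of $m_k$ jumps by $\eta_k(\nu_k)=-1$ across $\nu_k$; as this is the only crossing on $\mathrm{Fix}(H)$, the $S^1$-orthogonal degree of $\phi(\cdot;\nu)$ changes by a nonzero amount there. Invoking the global bifurcation theorem of \cite{IzVi03} and \cite{BaKrSt06}, this nonzero jump produces a connected continuum of nontrivial zeros of $\phi$ bifurcating from $(\mathbf{a},\nu_k)$ inside $\mathrm{Fix}(H)$, and via the Lyapunov--Schmidt reduction this lifts to a continuum of zeros of $f$; by construction every solution on it satisfies $x_j(t)=e^{j\zeta J}x_n(t+jk\zeta)$. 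The global alternative then gives that the continuum is unbounded in $H_{2\pi}^{1}\times\mathbb{R}^{+}$, so that its Sobolev norm or period tends to infinity, or else it reaches the boundary of the domain, namely a collision in $\Psi$, or it runs into a bifurcation point from another equilibrium.

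I expect the main obstacle to be matching the abstract orthogonal-degree invariant to the concrete Morse-index jump while correctly accounting for the $1{:}1$ resonance. The delicate point is to confirm that passing to $\mathrm{Fix}(H)$ genuinely isolates a single complex block, so that the local invariant is really $-1$ rather than a cancelling pair, and that no surviving higher-harmonic block collides at $\nu_k$; both facts rest on the counting $j\equiv lk\pmod n$ combined with the non-resonance condition, so carefully verifying this bookkeeping is the technical heart of the argument.
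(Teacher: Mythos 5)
Your local analysis is essentially the paper's proof in different packaging. The paper applies the full $G$-orthogonal degree and reads off the components attached to the isotropy subgroups from Proposition 3.2 of Chapter 4 in \cite{IzVi03}, obtaining the two branches with isotropy $\mathbb{\tilde{Z}}_{n}(k)$ and $\mathbb{\tilde{Z}}_{n}(n-k)$ simultaneously; you instead restrict first to the fixed-point space of $H=\langle(\zeta^{-1},-\zeta,k\zeta)\rangle$, which is exactly $\mathbb{\tilde{Z}}_{n}(k)$, and compute a degree there. Your bookkeeping is correct: the surviving Fourier/isotypic modes are those with $j\equiv lk \pmod{n}$, so at $l=\pm 1$ only the conjugate pair $(W_{k},l=1)$, $(W_{n-k},l=-1)$ remains and the $1{:}1$ resonance is resolved; the $l=0$ degeneracy is the orbit direction $\mathcal{J}\mathbf{a}$ handled by orthogonality; and non-resonance together with $4r^{2}(1+r^{2})^{-2}\neq 2-s_{j}/s_{1}$ keeps all higher blocks invertible. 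This restriction-first organization is legitimate and makes the resolution of the resonance transparent; it is, in substance, how the isotropy components of the paper's degree are computed anyway. One precision: on $\mathrm{Fix}(H)$ the acting group is the two-torus $G/H\cong SO(2)\times S^{1}$, not a single circle, so the degree you invoke must be orthogonal to both generators, exactly as in the paper.

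The genuine gap is in the global part. Theorem 5.2 of Chapter 2 in \cite{IzVi03} does not deliver the trichotomy you state; it says only that if the continuum stays in a closed bounded subset of $H_{2\pi}^{1}(\mathbb{R}^{2n}\backslash\Psi)\times\mathbb{R}^{+}$, then the sum of the local degrees over all bifurcation points met by the continuum is zero. A priori the branch could therefore be bounded, collision-free, and simply return to the trivial branch $\{(\mathbf{a},\nu)\}$ at another crossing. Note that your assertion that $\nu_{k}$ is ``the only crossing on $\mathrm{Fix}(H)$'' is true only at the level $\nu=\nu_{k}$: along the whole trivial branch inside $\mathrm{Fix}(H)$ there are countably many crossings, at every $\nu=\nu_{lk\operatorname{mod}n}/l$ with $l\geq1$ for which that frequency is real. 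The missing step --- which is precisely the paper's closing argument --- is that each such crossing has the same index $\eta=-1$ (apply the Proposition to the block $m_{lk}(l\nu)$ as $l\nu$ crosses $\nu_{lk}$), so a sum of these indices can never vanish; hence a bounded, collision-free continuum is impossible unless it reaches bifurcation points of a \emph{different} equilibrium, whose indices are not controlled. Without this sign-uniformity argument, your invocation of ``the global alternative'' does not yield the conclusion of the theorem, in particular it does not exclude the branch terminating back on the polygonal relative equilibrium itself.
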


\begin{proof}
The linear map $\phi^{\prime}(\mathbf{a};\nu)$ has $l$th Fourier components
$M(l\nu)$ with blocks $m_{k}(l\nu)$ for $l=0,...,p$. Let $M^{\perp}(0)$ be the
matrix $M(0)=D^{2}V($\textbf{$a$}$)$ in the orthogonal complement to the
generator of the orbit $A_{1}$\textbf{$a$}$=-\mathcal{J}$\textbf{$a$}. Then
$M^{\perp}(0)$ is invertible by the hypothesis that $4r^{2}\left(
1+r^{2}\right)  ^{-2}\neq2-s_{k}/s_{1}$, i.e. $G$\textbf{$a$} is an hyperbolic
orbit. In the new coordinates $w$, the vector $\mathcal{J}\mathbf{a}$
corresponds to $w_{n}=e_{2}=(0,1)$. Then $B_{n}$ in the orthogonal complement
of $e_{2}$ is given by $e_{1}^{T}B_{n}e_{1}=2s_{1}r^{-2}-4s_{1}\left(
1+r^{2}\right)  ^{-2}$, which is positive. Therefore, $\sigma:=\mathrm{sign}%
~e_{1}^{T}B_{n}e_{1}=1$.

Since $\nu_{k}$ is non-resonant, then $M(l\nu)$ is invertible at $\nu=\nu_{k}$
for $l=2,...,p$. For the $1$th Fourier mode, the Morse index of $m_{k}%
(\nu)=\left.  M(\nu)\right\vert _{W_{k}}$ changes at $\nu_{k}$ with $\eta
_{k}(\nu_{k})=-1$, and the action of $(\zeta,\zeta,\varphi)\in G_{\mathbf{a}}$
in $W_{k}$ is given by $\rho(\zeta,\zeta,\varphi)w_{k}=e^{ik\zeta}e^{i\varphi
}w_{k}$. Then the isotropy group of the space $W_{k}$ is the group
$\mathbb{\tilde{Z}}_{n}(k)$ generated by $\left(  \zeta,\zeta,-k\zeta\right)
\in G_{\mathbf{a}}$,
\[
\mathbb{\tilde{Z}}_{n}(k)=\left\langle \left(  \zeta,\zeta,-k\zeta\right)
\right\rangle .
\]
Using the computations for equivariant orthogonal degree in Proposition 3.2 of
Chapter 4 in \cite{IzVi03}, we conclude that the $G$-orthogonal degree in a
neighborhood of $\left(  x_{1},\nu\right)  =(G\mathbf{a},\nu_{k})$ is
\[
\mathrm{deg}\left(  d\left(  x_{1},G\mathbf{a}\right)  -\varepsilon,\phi
(x_{1};\nu)\right)  =\eta_{k}(\nu_{k})\left(  \mathbb{\tilde{Z}}%
_{n}(k)\right)  +\eta_{n-k}(\nu_{k})\left(  \mathbb{\tilde{Z}}_{n}%
(n-k)\right)  +...\text{ ,}%
\]
where the sum includes non-maximal isotropy groups $H$. Since $\eta_{k}%
(\nu_{k})=-1\neq0$, by the existence property of the degree, we conclude the
local existence of two solutions with isotropy groups $\mathbb{\tilde{Z}}%
_{n}(k)$ and $\mathbb{\tilde{Z}}_{n}(n-k)$. A solution with isotropy group
$\mathbb{\tilde{Z}}_{n}(k)$ satisfies symmetries%
\[
x_{j}(t)=\rho\left(  \zeta,\zeta,-k\zeta\right)  x_{j}=e^{-J\zeta}%
x_{j+1}(t-k\zeta)\text{.}%
\]

The global property follows from assuming that the branch is contained in a
closed bounded set inside $H_{2\pi}^{1}(\mathbb{R}^{2}\backslash\Psi
)\times\mathbb{R}^{+}$, unless it is a continuum set with period or Sobolev
norm going to infinite or with a collision orbit. Applying orthogonal degree
to the reduced function $\phi(x_{1})$ in this compact set, we conclude that
the sum of the local degrees at bifurcation points is zero as in Theorem 5.2,
Chapter 2 in \cite{IzVi03}. Since all the degrees have indices $\eta_{k}%
(\nu_{k})=-1$, the sum of the degrees can never be zero unless it goes to a
different equilibrium.
\end{proof}

By the definition (\ref{vk}), we have that $l\nu_{k}\neq\nu_{j}$ if and only
if%
\[
4r^{2}\left(  1+r^{2}\right)  ^{-2}=-\frac{\left(  s_{j}\left(  s_{j}%
-2s_{1}\right)  -l^{2}s_{k}\left(  s_{k}-2s_{1}\right)  \right)  }{\left(
s_{1}s_{j}-l^{2}s_{1}s_{k}\right)  }\rightarrow2-s_{k}/s_{1}%
\]
when $l\rightarrow\infty$. Therefore, the frequency $\nu_{k}$ is non-resonant
except for a countable set of parameters $4r^{2}\left(  1+r^{2}\right)  ^{-2}$
that accumulates at $2-s_{k}/s_{1}$. For the resonant frequencies the theorem
proves the existence of a branch with the higher frequency $\nu_{j}=l\nu_{k}$.

The frequency $\nu_{n}=0$ corresponds to rotations of the equilibrium and is
not a bifurcation point. The existence of periodic solutions cannot be proved
at $r_{k}$ with $4r_{k}^{2}\left(  1+r_{k}^{2}\right)  ^{-2}=2-s_{k}/s_{1}$
because $G$\textbf{$a$} is not an hyperbolic orbit. Actually, since $\det
B_{k}$ changes sign at $r_{k}$, there is a bifurcation of relative equilibria
at $r_{k}$, see \cite{Mo13} or \cite{GaIz11}.

The condition for linear stability is that all the normal frequencies are
real, i.e., that inequality (\ref{in}) holds for all $k\in\{1,...,n-1\}$. In
polar coordinates $r^{2}=\left(  1+\cos\theta\right)  \left(  1-\cos
\theta\right)  ^{-1}$, where $\theta$ is the polar angle, the linear stability
condition is equivalent to%
\[
\frac{k(n-k)}{n-1}-1<\cos^{2}\theta\text{ for }k\in\{1,...,n-1\}\text{.}%
\]
Using the fact that $k(n-k)$ is increasing in $k$ for $k\in\{1,...,[n/2]\}$,
we obtain a similar result that in \cite{Bo03} and \cite{Mo11}.

From the equality $B_{n-k}=B_{k}$, we have $m_{n-k}(\nu)=\bar{m}_{k}(-\nu)$.
This equality is a consequence of the fact that the problem is equivariant
under the action of the non-abelian group $G\cup\kappa G$, where $\kappa$ is a
reflection that acts as $\rho(\kappa)x_{j}(t)=\bar{x}_{n-j}(-t)$. Therefore,
it is possible that the families of relative periodic solutions with
symmetries $\mathbb{\tilde{Z}}_{n}(k)$ and $\mathbb{\tilde{Z}}_{n}(n-k)$ are
related by the reflection $\kappa$.

\section{Choreographies}

The term \textit{choreography} was adopted for the $n$-body problem in
celestial mechanics after the work of Sim\'{o} \cite{Si00}. The first
non-circular choreography was discovered numerically for $3$ bodies in
\cite{Mo93}, and its existence was proved analytically in \cite{ChMo00} using
the direct method of calculus of variations.

For $4$ vortices in the plane, choreographies have been found in \cite{Bo04}
and \cite{Bo05}. Recently, choreographies have also been constructed for $n$
vortices in general bounded domains with blow-up techniques. They are located
close to stagnation points of one vortex in the domain \cite{Ba16}, and close
to its boundary \cite{Ba18}.

Let
\[
Q_{j}(t):=q_{j}(t/\nu)=e^{it\omega/\nu}x_{j}(t),~
\]
be a reparametrization of the positions of the vortices in the inertial
reference frame. The following theorem shows that the families of relative
periodic solutions contain dense sets of choreographies.

\begin{theorem}
\label{proposition} A relative periodic solution with symmetry group
$\mathbb{\tilde{Z}}_{n}(k)$ and frequency $\nu=\omega(m/\ell)$, where $\ell$
and $m$ are relatively primes such that $k\ell-m\in n\mathbb{Z}$, satisfies%
\[
Q_{j}(t)=Q_{n}(t+j\tilde{k}\zeta)~\text{,}%
\]
where $\tilde{k}=k-(k\ell-m)\ell^{\ast}$, with $\ell^{\ast}$ the $m$-modular
inverse of $\ell$. Furthermore, the choreography $Q_{n}(t)$ is $2\pi
m$-periodic and symmetric with respect to $2\pi/m$ rotations.
\end{theorem}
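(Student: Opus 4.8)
The plan is to move from the rotating-frame symmetry carried by the solution to a pure time-translation relation in the inertial frame, trading multiplication by a root of unity for a shift of time, which becomes possible once the frequency ratio $\omega/\nu$ is rational.

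First I would record the two ingredients already available. By definition $Q_j(t)=e^{it\omega/\nu}x_j(t)$, and a solution with isotropy $\mathbb{\tilde{Z}}_n(k)$ satisfies $x_j(t)=e^{ij\zeta}x_n(t+jk\zeta)$. Setting $\alpha:=\omega/\nu=\ell/m$ (the hypothesis $\nu=\omega(m/\ell)$) and inverting through $x_n(s)=e^{-i\alpha s}Q_n(s)$, I can write every $Q_j$ in terms of the single curve $Q_n$:
\[
Q_j(t)=e^{i\alpha t}e^{ij\zeta}x_n(t+jk\zeta)=e^{ij\zeta(1-\alpha k)}\,Q_n(t+jk\zeta).
\]
With $\alpha=\ell/m$ and $\zeta=2\pi/n$ the leading phase becomes $e^{-2\pi i\,j(k\ell-m)/(nm)}$, and the hypothesis $k\ell-m\in n\mathbb{Z}$ forces $p:=j(k\ell-m)/n\in\mathbb{Z}$, so this phase is the integer power $e^{-2\pi i p/m}$ of a primitive $m$th root of unity.

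The decisive step is to absorb that residual rotation into a time shift. Because $x_n$ is $2\pi$-periodic, one finds $Q_n(t+2\pi)=e^{2\pi i\ell/m}Q_n(t)$; iterating $\ell^{\ast}$ times and using $\ell\ell^{\ast}\equiv1\pmod{m}$ yields
\[
Q_n(t+2\pi\ell^{\ast})=e^{2\pi i/m}Q_n(t),
\]
which is exactly the asserted symmetry under $2\pi/m$ rotations, while $Q_n(t+2\pi m)=e^{2\pi i\ell}Q_n(t)=Q_n(t)$ gives the $2\pi m$-periodicity. Consequently $e^{-2\pi i p/m}Q_n(s)=Q_n(s-2\pi\ell^{\ast}p)$, and substituting back,
\[
Q_j(t)=Q_n\!\left(t+jk\zeta-2\pi\ell^{\ast}\tfrac{j(k\ell-m)}{n}\right)=Q_n\!\left(t+\tfrac{2\pi j}{n}\bigl(k-\ell^{\ast}(k\ell-m)\bigr)\right)=Q_n(t+j\tilde{k}\zeta),
\]
with $\tilde{k}=k-(k\ell-m)\ell^{\ast}$, which is the claimed choreography.

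I expect the proof to present no structural difficulty; the content is entirely in the modular bookkeeping of the last two displays. The points that must be checked with care are that the phase angle is an \emph{exact} integer multiple of $2\pi/m$ (so the rotation symmetry of $Q_n$ applies with no leftover rotation) and that the two time shifts $jk\zeta$ and $2\pi\ell^{\ast}p$ collapse precisely to $j\tilde{k}\zeta$. Both hinge on the single pair of congruences $k\ell-m\in n\mathbb{Z}$ and $\ell\ell^{\ast}\equiv1\pmod{m}$, after which the identity is forced.
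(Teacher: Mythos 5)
Your proof is correct and follows essentially the same route as the paper: express $Q_j$ in terms of $Q_n$ with a residual phase, use $\omega/\nu=\ell/m$ and $k\ell-m\in n\mathbb{Z}$ to identify that phase as a power of $e^{2\pi i/m}$, and then trade it for a time shift via the rotation symmetry of $Q_n$ and the modular inverse $\ell^{\ast}$, arriving at $\tilde{k}=k-(k\ell-m)\ell^{\ast}$. The only cosmetic difference is that you iterate the basic relation to the cleaner form $Q_n(t+2\pi\ell^{\ast})=e^{2\pi i/m}Q_n(t)$ before applying it, whereas the paper applies the single-period relation directly with the shift $2\pi jr\ell^{\ast}$.
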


\begin{proof}
Since $e^{it\omega/\nu}=e^{it\ell/m}$ is $2\pi m$-periodic, the function
$Q_{n}(t)=e^{it\omega/\nu}x_{n}(t)$ is $2\pi m$-periodic. Furthermore, since%
\begin{equation}
Q_{n}(t-2\pi)=e^{i(t-2\pi)\omega/\nu}x_{n}(t-2\pi)=e^{-i2\pi\ell/m}Q_{n}(t),
\label{symq}%
\end{equation}
the orbit of $Q_{n}(t)$ is invariant under $2\pi/m$ rotations.

The solutions satisfy
\begin{align*}
Q_{j}(t)  &  =e^{it(\omega/\nu)}x_{j}(t)=e^{it(\omega/\nu)}e^{ij\zeta}%
x_{n}(t+jk\zeta)\\
&  =e^{it(\omega/\nu)}e^{ij\zeta}e^{-i(\omega/\nu)\left(  t+jk\zeta\right)
}Q_{n}(t+jk\zeta)=e^{-ij\left(  (\omega/\nu)k-1\right)  \zeta}Q_{n}%
(t+jk\zeta)~\text{.}%
\end{align*}
Using the fact that $\omega/\nu=\ell/m$, $\zeta=2\pi/n$, we have
\[
j\left(  (\omega/\nu)k-1\right)  \zeta=2\pi j\left(  \frac{\ell k-m}%
{mn}\right)  =2\pi j(r/m)~,
\]
with $r=(k\ell-m)/n\in\mathbb{Z}$ by assumption.

Since $\ell$ and $m$ are relatively prime, we can find $\ell^{\ast}$, the
$m$-modular inverse of $\ell$. Since $\ell\ell^{\ast}=1$ mod $m$, it follows
from the symmetry (\ref{symq}) that
\[
Q_{n}(t-2\pi jr\ell^{\ast})=e^{-i2\pi j(r/m)}Q_{n}(t).
\]
Using the fact that $\tilde{k}=k-rn\ell^{\ast}=k-(k\ell-m)\ell^{\ast}$, the
result follows from
\begin{equation}
Q_{j}(t)=e^{-i2\pi j(r/m)}Q_{n}(t+jk\zeta)=Q_{n}(t+j(k-rn\ell^{\ast})\zeta).
\end{equation}

\end{proof}

\noindent\textbf{Acknowledgements.} The author is grateful to T. Bartsch, W.
Krawcewicz, E. Doedel and R. Calleja and the referees for their useful
comments. This project is supported by PAPIIT-UNAM grant IN115019.

\end{document}